\documentclass{amsart}
\usepackage{amssymb,latexsym,enumitem,graphics}
\pdfoutput=1

 \newtheorem{thm}{Theorem}[section]
 \newtheorem{cor}[thm]{Corollary}
 \newtheorem{lem}[thm]{Lemma}
 \newtheorem{prop}[thm]{Proposition}
 \theoremstyle{definition}
 
 \theoremstyle{remark}
 \newtheorem{rem}[thm]{Remark}
 
 \numberwithin{equation}{section}

\theoremstyle{remark}

\begin{document}
\title[Complex symmetric Weighted Composition--Differentiation Operators]{Complex symmetric Weighted Composition--Differentiation Operators of order $n$ on the Weighted Bergman Spaces}
\author[M. Moradi and M. Fatehi]{Mahbube Moradi and Mahsa Fatehi}

\date{July 10, 2020}
\address{Department of Mathematics\\ Shiraz Branch, Islamic Azad University\\
Shiraz, Iran}
\email{mmoradi8524@gmail.com}
\address{Department of Mathematics\\ Shiraz Branch, Islamic Azad University\\
Shiraz, Iran}
\email{fatehimahsa@yahoo.com}

\begin{abstract}
We study the complex symmetric structure of  weighted composition--differentiation operators of order $n $ on the weighted Bergman spaces $A_{\alpha}^2$ with respect to some conjugations. Then we provide some examples of these operators.
\end{abstract}
\subjclass[2010]{47B38 (Primary), 30H10, 47A05, 47B15, 47B33}
\keywords{Weighted composition--differentiation operator, complex symmetric, normal, self-adjoint, weighted Bergman spaces}
\maketitle
\thispagestyle{empty}
\section{Preliminaries}

Let $\mathbb{D}$ denote the open disk in the complex plane $\Bbb{C}$. For $\alpha>-1$, the {\it weighted Bergman space} $A_{\alpha}^2$ is the weighted Hardy space consisting of all analytic functions
$f(z)= \sum_{n=0}^{\infty} a_n z^n $ on $\mathbb{D}$ such that $\|f\|^2= \sum_{n=0}^{\infty} |a_n|^2 \beta(n)^2 < \infty,$
where for each nonnegative integer $n$,
$\beta(n)=\|z^n\|= \sqrt{\frac{n! \Gamma(\alpha+2)}{\Gamma(n+\alpha+2)}}$.
 The inner product of this space is given by $\langle \Sigma_{j=0}^{\infty} a_j z^j, \Sigma_{j=0}^{\infty} b_j z^j   \rangle = \Sigma_{j=0}^{\infty} a_j  \overline{b_j} \beta(j)^2$
for functions in $A_{\alpha}^2$.
It is well-known that this space is a reproducing kernel Hilbert space, with  kernel functions $K_w^{(m)}$ for any $w \in \mathbb{D}$
and nonnegative integer $m$ that $
\langle f, K_w^{(m)} \rangle = f^{(m)}(w)$
 for each $f \in A_{\alpha}^2$.
To simplify notation, we use $K_w$, when $m=0$.
 We recall that
$
K_w(z)= 1/(1- \overline{w}z)^{\alpha+2}=\sum_{j=0}^{\infty}\frac{\overline{w}^{j}z^{j}}{\beta(j)^{2}}
$
and for $m> 1$,
\begin{eqnarray*}
K_w^{(m)}(z)= \frac{(\alpha+2) \ldots (\alpha+m+1) z^m}{(1-\overline{w}z)^{m+\alpha+2}}= \frac{m! z^m}{\beta (m)^2 \, (1 - \overline{w}z)^{m+\alpha+2}}.
\end{eqnarray*}
Moreover, for each nonnegative integer $m$, we have
\begin{eqnarray*}
\|K_w^{(m)} \|^2 = \sum_{j=m}^{\infty} \frac{(|w|^2)^{j-m}}{\beta(j)^2} \bigg(\frac{j!}{(j-m)!}\bigg)^2.
\end{eqnarray*}\par
We recall that $H^{\infty}(\mathbb{D})=H^{\infty}$ is the
space of all bounded analytic functions defined on $\mathbb{D}$,
with supremum norm $\|f\|_{\infty}=\sup_{z\in \mathbb{D}}|f(z)|$.
Let $P_{\alpha}$ be the projection of $L^{2}({\mathbb{D}}, dA_{\alpha})$
onto $A^{2}_{\alpha}$.
Given a function $h \in L^{\infty}(\mathbb{D})$, the  {\it Toeplitz  operator}
 $T_{h}$ on $A^{2}_{\alpha}$ is defined by $T_{h}(f)=P_{\alpha}(hf)$. It is easy to see that if $h \in H^{\infty}$, then $T_{h}(f)=h\cdot f$.
 For $\varphi$ an analytic self-map of $\mathbb{D}$, let $C_{\varphi}$ be the {\it composition operator}
 so that $C_{\varphi}(f) = f \circ \varphi$ for any $f \in A^{2}_{\alpha}$. All composition operators and Toeplitz operators are bounded on $A^{2}_{\alpha}$. A natural generalization of a composition operator is an operator that takes $f$ to $\psi \cdot f \circ \varphi$, where $f \in A^{2}_{\alpha}$ and $\psi$ is an analytic map on $\mathbb{D}$. This operator is called a {\it weighted composition operator} and is  denoted by $C_{\psi,\varphi}$. \par
 For a positive integer $n$,  we define the {\it differential operator of order $n$}  on $A^{2}_{\alpha}$ by $D^{(n)}(f)=f^{(n)}$.
 We know that the differentiation operator of order $n$ is not bounded on $A^{2}_{\alpha}$, however, for many analytic self-maps $\varphi$, the operator $C_{\varphi}D^{(n)}$ is bounded on $A^{2}_{\alpha}$.
 The study of such operators  were initially addressed by Hibschweiler, Portnoy and Ohno in \cite{hp} and \cite{s3}
  and  afterwards has been noticed by many  researchers (see also \cite{s1}, \cite{s2} and \cite{s4}).
 Ohno \cite{s3}  and S. Stevi\' c \cite{s4} characterized boundedness and compactness of  $C_{\varphi}D^{(1)}$ on the Hardy space and $C_{\varphi}D^{(n)}$ on the weighted Bergman spaces, respectively.
    The bounded operator $C_{\varphi}D^{(n)}$  is denoted by $D_{\varphi,n}$ and called by {\it composition-differentiation operator of order $n$}. For an analytic function $\psi$ on $\mathbb{D}$, the {\it weighted composition-differentiation operator of order $n$}  on $A^{2}_{\alpha}$ is defined by $D_{\psi,\varphi,n}(f)=\psi\cdot(f^{(n)}\circ \varphi)$. Note that $D_{\psi,\varphi,n}$ is actually the product of the Toeplitz operator $T_{\psi}$ and $D_{\varphi,n}$, whenever $\psi \in H^{\infty}$ and $D_{\varphi,n}$ is bounded.   To avoid trivial situations, we will be assuming throughout this paper that $\psi$ is not identically $0$ and that $\varphi$ is nonconstant.\par
A bounded operator $T$ is called {\it complex symmetric operator} on a complex Hilbert space $H$ if there exits a conjugation $C$ (i.e. an antilinear, isometric involution) so that $CT^{\ast}C=T$ and we say that $T$ is $C$-symmetric. Complex symmetric operators has been considered initially  on Hilbert spaces of holomorphic functions by Garcia and  Putinar in \cite{gar1} and \cite{gar2}. Then complex symmetric weighted composition operators were considered in \cite{s0}, \cite{ham}, \cite{s5} and \cite{s55}.
In this paper, we use the symbol $J$ for the special conjugation that $(Jf)(z)=\overline{f(\overline{z})}$ for each analytic function $f$.\par
 For each $z \in \Bbb{C}$, we write $z = |z|e^{i\theta}$, where $0 \leq \theta < 2\pi$. The argument of $z$ is denoted by $\mbox{Arg}(z) = \theta$  and we set $\mbox{Arg}(0)=0$.
\section{complex symmetric operators $D_{\psi, \varphi, n}$}

  For $\varphi$ an analytic self-map of $\mathbb{D}$, the generalized Nevanlinna counting function $N_{\varphi, \alpha+2}$ is defined by  $N_{\varphi, \alpha+2} (w)=\sum_{\varphi(z)=w}[\ln(1/|z|)]^{\alpha+2}$, where $w$ belongs to $\mathbb{D}\setminus \{\varphi(0)\}$. The next proposition gives a necessary and sufficient condition for  $D_{\varphi, n}$ to be  bounded and compact.

\begin{prop} \label{f0}
\cite[Theorem 9]{s4}.
Let $\varphi$ be an analytic self-map of $\mathbb{D}$, $n \in \Bbb{N}$ and $\alpha >-1$. Then the following hold.
\begin{itemize}
\item[$a)$]
An operator
$D_{\varphi, n}:A_{\alpha}^2 \rightarrow  A_{\alpha}^2$ is bounded if and only if
\begin{eqnarray*}
N_{\varphi, \alpha+2} (w) = O  \left(  [ \ln (1/|w|)  ]^{\alpha+2+ 2n}  \right) \,\,\,\,\, (|w| \rightarrow 1).
\end{eqnarray*}
\item[$b)$]
An operator
$D_{\varphi,n}:A_{\alpha}^2 \rightarrow  A_{\alpha}^2$ is compact if and only if
\begin{eqnarray*}
N_{\varphi, \alpha+2} (w) = o  \left(  [ \ln (1/|w|)  ]^{\alpha+2+ 2n}  \right)   \,\,\,\,\, (|w| \rightarrow 1).
\end{eqnarray*}
\end{itemize}
\end{prop}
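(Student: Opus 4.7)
The plan is to reduce both boundedness and compactness of $D_{\varphi,n}$ to a Carleson-type measure comparison against a power weight, by combining a higher-order Littlewood--Paley description of $A_\alpha^2$ with a Stanton-type change of variables for composition operators.

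First I would iterate the classical identity $\|f\|^2_{A_\alpha^2} \asymp |f(0)|^2 + \|f'\|^2_{A_{\alpha+2}^2}$ to obtain the derivative-norm equivalence
$$\|f\|^2_{A_\alpha^2} \asymp \sum_{j=0}^{n} |f^{(j)}(0)|^2 + \int_\mathbb{D} |f^{(n+1)}(z)|^2(1-|z|^2)^{\alpha+2n+2}\,dA(z).$$
In parallel, for any analytic $g$, I would invoke the weighted-Bergman analogue of Stanton's formula,
$$\|g\circ\varphi\|^2_{A_\alpha^2} \asymp |g(\varphi(0))|^2 + \int_\mathbb{D} |g'(w)|^2\, N_{\varphi,\alpha+2}(w)\,dA(w),$$
which arises from the first-order Littlewood--Paley identity applied to $g\circ\varphi$, followed by the non-univalent substitution $w=\varphi(z)$ and the comparison $(1-|z|^2)^{\alpha+2}\asymp[\log(1/|z|)]^{\alpha+2}$ absorbed into the counting function. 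Setting $g=f^{(n)}$ and using $(f^{(n)}\circ\varphi)'(z)=f^{(n+1)}(\varphi(z))\varphi'(z)$ yields
$$\|D_{\varphi,n}f\|^2_{A_\alpha^2} \asymp |f^{(n)}(\varphi(0))|^2 + \int_\mathbb{D} |f^{(n+1)}(w)|^2\, N_{\varphi,\alpha+2}(w)\,dA(w).$$

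With the two integral expressions in hand, boundedness of $D_{\varphi,n}$ becomes the measure inequality
$$\int_\mathbb{D} |h(w)|^2 N_{\varphi,\alpha+2}(w)\,dA(w) \;\leq\; C\int_\mathbb{D} |h(w)|^2 (1-|w|^2)^{\alpha+2n+2}\,dA(w)$$
for all $h$ in the range of $f\mapsto f^{(n+1)}$, that is, essentially for all analytic $h$ with the appropriate integrability (lower-order point-evaluation terms being controlled by $\|f\|_{A_\alpha^2}$ since $\varphi(0)\in\mathbb{D}$). Sufficiency of the pointwise estimate $N_{\varphi,\alpha+2}(w)=O([\log(1/|w|)]^{\alpha+2n+2})$ then follows from $\log(1/|w|)\asymp 1-|w|^2$ near $\partial\mathbb{D}$, making the power weight dominate $N_{\varphi,\alpha+2}$ pointwise up to a controlled region near the origin. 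For necessity, I would test the operator on appropriately normalized reproducing-kernel peaking functions $K_w^{(n+1)}/\|K_w^{(n+1)}\|$ as $|w|\to 1^-$: these concentrate the left-hand side near $w$, while the right-hand side has the predictable power behavior in $1-|w|^2$, and a standard subharmonic averaging argument on $N_{\varphi,\alpha+2}$ converts the integral bound into the claimed pointwise $O$-estimate. Part (b) follows from the same machinery using the weak-to-norm principle: $D_{\varphi,n}$ is compact exactly when the embedding above is a vanishing Carleson embedding, which matches the little-$o$ version of the Nevanlinna condition.

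The principal obstacle is the Stanton-type identity with higher-order weight: converting the measure $(1-|z|^2)^{\alpha+2}|\varphi'(z)|^2\,dA(z)$ into $N_{\varphi,\alpha+2}(w)\,dA(w)$ requires both the non-univalent substitution formula and a subharmonic-function lemma of Littlewood--Paley type that produces the correct power of $\log(1/|z|)$ inside the counting function. Once this identity is available, the rest is the familiar boundedness-versus-Carleson and compactness-versus-vanishing-Carleson dichotomy, with the pointwise conversion handled by reproducing-kernel test functions.
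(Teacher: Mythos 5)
The paper does not prove this proposition at all: it is quoted verbatim as \cite[Theorem 9]{s4} (Stevi\'c), so there is no internal proof to compare against. Your outline is, in substance, the standard argument from that circle of literature (Stevi\'c/Smith/Shapiro): iterate the Littlewood--Paley equivalence to get $\|f\|^2_{A^2_\alpha}\asymp\sum_{j\le n}|f^{(j)}(0)|^2+\int|f^{(n+1)}|^2(1-|z|^2)^{\alpha+2n+2}\,dA$, use the non-univalent change of variables to convert $\|g\circ\varphi\|^2_{A^2_\alpha}$ into $|g(\varphi(0))|^2+\int|g'|^2N_{\varphi,\alpha+2}\,dA$, and thereby reduce boundedness of $D_{\varphi,n}$ to the embedding of $A^2_{\alpha+2n+2}$ into $L^2(N_{\varphi,\alpha+2}\,dA)$; the exponents do come out right ($N_{\varphi,\alpha+2}(w)\lesssim(1-|w|)^{\alpha+2n+2}\asymp[\log(1/|w|)]^{\alpha+2+2n}$). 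Three points in your sketch carry the real weight and should be made explicit if you write it up: (i) the sub-mean-value (``subharmonic averaging'') property of the generalized counting function $N_{\varphi,\gamma}$ on disks avoiding $\varphi(0)$ is a genuine lemma of Shapiro type, available here because $\gamma=\alpha+2>1$, and it is exactly what converts the integral bound from the kernel test functions into the pointwise $O$-estimate; (ii) $\log(1/|w|)$ and $1-|w|$ are comparable only away from the origin, and $N_{\varphi,\alpha+2}$ has a logarithmic-power singularity at $\varphi(0)$, so the ``controlled region'' (compact part, handled by a Littlewood-type inequality and boundedness of point evaluations) must be treated separately in the sufficiency direction; (iii) the compactness statement is not an automatic ``weak-to-norm'' corollary — one needs the usual essential-norm argument (split $N_{\varphi,\alpha+2}$ near and away from the boundary for the upper bound, and weakly null normalized kernels for the lower bound) to pass between the little-$o$ condition and compactness. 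With those steps filled in, your plan reproduces the cited proof; it does not offer a shortcut relative to it.
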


The next corollary follows from  Proposition \ref{f0} for the case that $\varphi$ is univalent on $\mathbb{D}$ (note that $\mbox{ln} (\frac{1}{|w|})$ is comparable to $1-|w|$ as $|w| \rightarrow 1^{-})$.

\begin{cor}  \label{f1}
Let $\varphi$ be a univalent self-map of $\mathbb{D}$. Then
\begin{itemize}
\item[$a)$]
An operator
$D_{\varphi, n}$ is bounded on $A_{\alpha}^2 $ if and only if
\begin{eqnarray*}
\sup_{w \in \mathbb{D}}  \frac{(1- |w|)^{\alpha+2}}{ (1-|\varphi (w)| )^{\alpha+2+2n}} <\infty.
\end{eqnarray*}
\item[$b)$]
An operator
$D_{\varphi,n}$ is compact on $A_{\alpha}^2 $ if and only if
\begin{eqnarray*}
\lim_{|w| \rightarrow 1}  \frac{(1- |w|)^{\alpha+2}}{ (1-|\varphi (w)|  )^{\alpha+2+2n}} =0.
\end{eqnarray*}
\end{itemize}
\end{cor}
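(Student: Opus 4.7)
The plan is to deduce this directly from Proposition \ref{f0} by using univalence to simplify the generalized Nevanlinna counting function and then applying the stated comparability $\ln(1/|w|) \sim 1-|w|$ as $|w| \to 1^-$.

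First, since $\varphi$ is univalent, the equation $\varphi(z)=w$ has at most one solution, so
\[
N_{\varphi,\alpha+2}(w) = \bigl[\ln(1/|\varphi^{-1}(w)|)\bigr]^{\alpha+2} \quad (w \in \varphi(\mathbb{D})),
\]
and $N_{\varphi,\alpha+2}(w)=0$ otherwise. Substituting $w=\varphi(z)$ and invoking the asymptotic $\ln(1/t) \sim 1-t$ on both sides of the condition in Proposition \ref{f0}(a), that condition becomes
\[
(1-|z|)^{\alpha+2} = O\bigl((1-|\varphi(z)|)^{\alpha+2+2n}\bigr) \quad \text{as } |\varphi(z)|\to 1^-,
\]
and an identical computation handles (b) with $O$ replaced by $o$.

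Next, I would show that this boundary condition is equivalent to the global sup/limit appearing in the corollary. For (a), on the region $\{|\varphi(z)|\le r\}$ with any fixed $r<1$, the denominator $(1-|\varphi(z)|)^{\alpha+2+2n}$ is bounded below by $(1-r)^{\alpha+2+2n}$ and the numerator by $1$, so the quotient is automatically bounded there; hence boundedness near $|\varphi(z)|=1$ is equivalent to finiteness of $\sup_{z\in\mathbb{D}}$ of the quotient. For (b), one needs to pass between ``$|\varphi(z)|\to 1$'' and ``$|z|\to 1$'': the former forces the latter by a normal-family argument (any $\mathbb{D}$-interior subsequential limit of such a $z$ would map into $\partial\mathbb{D}$, contradicting $\varphi(\mathbb{D})\subset\mathbb{D}$), while if $|z|\to 1$ but $|\varphi(z)|$ stays away from $1$ then the numerator $\to 0$ while the denominator is bounded below, so the quotient $\to 0$ anyway. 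Thus the limit condition stated in the corollary is equivalent to the $o$-condition above.

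The main obstacle is really only bookkeeping: being careful that the change of variables $z=\varphi^{-1}(w)$ makes sense on $\varphi(\mathbb{D})$ alone (so $w$ outside the range contributes nothing), and confirming that the resulting boundary estimates promote to the global statements in the corollary. Once the univalent simplification of $N_{\varphi,\alpha+2}$ is in place, the rest is routine use of the asymptotic relation $\ln(1/t)\sim 1-t$ as $t\to 1^-$.
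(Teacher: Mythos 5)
Your proposal is correct and is exactly the route the paper takes: the paper simply notes that for univalent $\varphi$ the counting function $N_{\varphi,\alpha+2}(w)$ reduces to the single term $[\ln(1/|\varphi^{-1}(w)|)]^{\alpha+2}$ and that $\ln(1/|w|)$ is comparable to $1-|w|$ as $|w|\to 1^-$, leaving the bookkeeping (change of variables, the region where $|\varphi(z)|$ stays away from $1$, and the passage between $|z|\to 1$ and $|\varphi(z)|\to 1$) implicit, which you carry out correctly.
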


Note that Corollary \ref{f1} shows that  $D_{\varphi,n}$ is bounded if $\varphi$ does note have a finite angular derivative at any points on $\partial \mathbb{D}$ (see \cite[Theorem 2.44]{S.0}) and so in this case $D_{\psi,\varphi,n}$ is bounded on $A_{\alpha}^2 $, when $\psi \in H^{\infty}$. We state the following lemma which will be used in this paper.
\begin{lem} \label{f2}
If  an operator $D_{\psi, \varphi, n}$ is bounded on   $A_{\alpha}^2 $, then
\begin{eqnarray*}
D_{\psi, \varphi, n}^{\ast} (K_w)= \overline{\psi(w)}  K_{\varphi (w)}^{(n)}.
\end{eqnarray*}
\end{lem}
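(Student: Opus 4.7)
The plan is to verify the identity by pairing both sides against an arbitrary test function $f \in A_{\alpha}^2$ and invoking the defining property of the reproducing kernels $K_w$ and $K_{\varphi(w)}^{(n)}$.

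First I would compute $\langle f, D_{\psi, \varphi, n}^{\ast} (K_w) \rangle$ by moving the operator across the inner product: this equals $\langle D_{\psi, \varphi, n} f, K_w \rangle$, which by the reproducing property of $K_w$ and the definition $D_{\psi, \varphi, n}(f) = \psi \cdot (f^{(n)} \circ \varphi)$ reduces to the pointwise evaluation $\psi(w)\, f^{(n)}(\varphi(w))$.

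Next I would compute $\langle f, \overline{\psi(w)} K_{\varphi(w)}^{(n)} \rangle$. Pulling the scalar $\overline{\psi(w)}$ out (conjugated to $\psi(w)$ since it sits in the second slot) and using the defining property $\langle f, K_{\varphi(w)}^{(n)} \rangle = f^{(n)}(\varphi(w))$ recalled in the Preliminaries, this also equals $\psi(w)\, f^{(n)}(\varphi(w))$.

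Since the two inner products agree for every $f \in A_{\alpha}^2$, uniqueness in the Riesz representation theorem forces $D_{\psi, \varphi, n}^{\ast}(K_w) = \overline{\psi(w)}\, K_{\varphi(w)}^{(n)}$. There is no real obstacle here; the only subtlety worth double-checking is the placement of the complex conjugate (the inner product is conjugate-linear in the second argument), and the implicit use of boundedness of $D_{\psi,\varphi,n}$, which ensures $D_{\psi,\varphi,n}^{\ast}$ exists and that the manipulation $\langle f, D_{\psi,\varphi,n}^{\ast}K_w\rangle = \langle D_{\psi,\varphi,n} f, K_w\rangle$ is justified for every $f$.
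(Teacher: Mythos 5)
Your proposal is correct and is essentially the paper's own argument: both compute $\langle f, D_{\psi,\varphi,n}^{\ast}K_w\rangle = \langle D_{\psi,\varphi,n}f, K_w\rangle = \psi(w)f^{(n)}(\varphi(w)) = \langle f, \overline{\psi(w)}K_{\varphi(w)}^{(n)}\rangle$ for all $f\in A_{\alpha}^2$ and conclude by uniqueness of the Riesz representative. No changes needed.
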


\begin{proof}
We can see that
$$\langle f,  D_{\psi, \varphi, n }^{\ast} (K_w) \rangle=\langle D_{\psi, \varphi, n }  f, K_w\rangle=\psi (w) f^{(n)}  (\varphi (w))=\langle f, \overline{\psi(w)}  K_{\varphi(w)}^{(n)} \rangle$$
for any $f \in A_{\alpha}^2$. Hence the result follows.
\end{proof}

In the whole of this paper, we put $t=(\alpha +2)  (\alpha+3) \ldots (\alpha+n+1)$. Now we provide a few observations about $J$-symmetric operator $D_{\psi, \varphi, n}$ which will be used in the proof of Theorem \ref{f7}.

\begin{prop} \label{f3}
Suppose that an operator  $D_{\psi, \varphi, n}$ is $J$-symmetric on $A_{\alpha}^2$. Then the following hold.
\begin{itemize}
\item[$(i)$]
For each $0 \leq m <n $, $\psi^{(m)} (0)=0$;
\item[$(ii)$]
$\psi^{(n)} (0) \not = 0$;
\item[$(iii)$]
$\psi(w) \not =0$ for any $w \in \mathbb{D} \setminus  \{0 \}$;
\item[$(iv)$]
The map $\varphi$ is univalent.
\end{itemize}
\end{prop}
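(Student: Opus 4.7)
Plan. The strategy is to convert the $J$-symmetry identity $D_{\psi,\varphi,n}J = JD_{\psi,\varphi,n}^{\ast}$ into a master functional equation for $\psi$ and $\varphi$, from which all four claims fall out. Using Lemma \ref{f2}, the explicit kernel formulas (whose coefficients and exponents are real, so $JK_w = K_{\overline{w}}$ and $JK_v^{(n)} = K_{\overline{v}}^{(n)}$), and the identity $K_{\overline{\varphi(w)}}^{(n)}(z) = tz^n/(1-\varphi(w)z)^{n+\alpha+2}$, evaluation of $D_{\psi,\varphi,n}JK_w = JD_{\psi,\varphi,n}^{\ast}K_w$ at $z$ reduces (after cancelling the common factor $t$) to
\begin{equation*}
\psi(z)\,w^n\,(1 - z\varphi(w))^{n+\alpha+2} \;=\; \psi(w)\,z^n\,(1 - w\varphi(z))^{n+\alpha+2} \tag{$\star$}
\end{equation*}
for every $z,w \in \mathbb{D}$, where the noninteger powers are the principal branches (well-defined because $|z\varphi(w)|, |w\varphi(z)| < 1$).

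For (i) and (ii), fix $w\in\mathbb{D}$ with $\psi(w)\ne 0$ and view $(\star)$ as a power series in $z$. The right-hand side vanishes at $z=0$ exactly to order $n$, with nonzero leading coefficient $\psi(w)(1-w\varphi(0))^{n+\alpha+2}$, whereas the left-hand side vanishes exactly to the order $k$ of vanishing of $\psi$ at $0$. Matching orders forces $k=n$, which is exactly the combined statement $\psi^{(m)}(0)=0$ for $0\le m<n$ and $\psi^{(n)}(0)\ne 0$.

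For (iii), write $\psi(z)=z^n\widetilde{\psi}(z)$ with $\widetilde{\psi}(0)=\psi^{(n)}(0)/n!\ne 0$; cancel $z^n w^n$ from $(\star)$ and set $z=0$ to obtain $\widetilde{\psi}(w) = \widetilde{\psi}(0)(1-w\varphi(0))^{-(n+\alpha+2)}$, which is nowhere zero on $\mathbb{D}$. Hence $\psi(w)=w^n\widetilde{\psi}(w)$ vanishes only at $w=0$.

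For (iv), suppose $\varphi(a)=\varphi(b)=v$ for distinct $a,b\in\mathbb{D}$. When $a,b\ne 0$ we have $\psi(a),\psi(b)\ne 0$ by (iii); dividing the $w=a$ and $w=b$ instances of $(\star)$ cancels the common factor $\psi(z)(1-zv)^{n+\alpha+2}$ and yields $\bigl((1-a\varphi(z))/(1-b\varphi(z))\bigr)^{n+\alpha+2}$ constant in $z$. Since the base is an analytic, nonvanishing function whose $(n+\alpha+2)$-th power is constant, the base itself is constant, and a short algebraic rearrangement then forces $\varphi$ to be constant, contradicting the standing hypothesis. If instead $a=0$, substitute the explicit form of $\widetilde{\psi}$ from (iii) into $(\star)$ at $w=b$ and use $\varphi(b)=\varphi(0)=v$: the identity collapses to $(1-b\varphi(z))^{n+\alpha+2}=(1-bv)^{n+\alpha+2}$, again forcing $\varphi$ constant. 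The main obstacle is the careful derivation of $(\star)$—in particular, keeping straight that $K_{\overline{w}}^{(n)}$ appearing in $D_{\psi,\varphi,n}K_{\overline{w}}(z)=\psi(z)K_{\overline{w}}^{(n)}(\varphi(z))$ denotes the $n$-th derivative of the \emph{function} $K_{\overline{w}}$, not the reproducing kernel—and the branch-of-root passage in (iv), which is legitimate since all bases in sight are analytic, nonvanishing, and normalised at a common base point.
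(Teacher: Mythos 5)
Your proof is correct, but it takes a genuinely different route from the paper. The paper never writes down a global functional equation: it works only with the derivative kernels at the origin, proving (i) by induction ($JD_{\psi,\varphi,n}K_0^{(m+1)}=0$ versus $D_{\psi,\varphi,n}^{\ast}JK_0^{(m+1)}=\overline{\psi^{(m+1)}(0)}\,K_{\varphi(0)}^{(n)}$), getting (ii) from $K_0^{(n)}$, (iii) from a single kernel $K_{\overline{w}}$, and (iv) by observing that $\ker D_{\psi,\varphi,n}$ consists of the polynomials of degree less than $n$, so $\psi(w_2)K_{\overline{w_1}}-\psi(w_1)K_{\overline{w_2}}$ must be such a polynomial, comparing Taylor coefficients, and invoking the open mapping theorem when one of the points is $0$. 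You instead test $D_{\psi,\varphi,n}J=JD_{\psi,\varphi,n}^{\ast}$ on all kernels $K_w$ at once and extract everything from the single identity $(\star)$ $\psi(z)w^n(1-z\varphi(w))^{n+\alpha+2}=\psi(w)z^n(1-w\varphi(z))^{n+\alpha+2}$; this is the approach used elsewhere in the complex-symmetric weighted-composition literature, and it buys more: setting $z=0$ in the reduced equation already gives the explicit formula $\psi(w)=\frac{\psi^{(n)}(0)}{n!}\,w^n(1-w\varphi(0))^{-(n+\alpha+2)}$, i.e.\ half of Theorem \ref{f7}, and your univalence argument avoids both the kernel-of-the-operator observation and the open mapping theorem. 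The price is the branch bookkeeping for the non-integer exponent $n+\alpha+2$, which you handle correctly (all bases lie in the right half-plane, and $e^{F}$ constant with $F$ analytic forces $F$ constant), whereas the paper's finitely many evaluations at $0$ never meet this issue. Two small points to tighten: in the order-of-vanishing argument for (i)--(ii) you must choose $w\neq 0$ with $\psi(w)\neq 0$ (possible since the zeros of $\psi$ are isolated), as the factor $w^n$ would otherwise kill the left-hand side; and in (iv), when $(1-a\varphi)/(1-b\varphi)\equiv\lambda$, the rearrangement $(\lambda b-a)\varphi\equiv\lambda-1$ gives either $\varphi$ constant or ($\lambda=1$ and $a=b$), so the degenerate subcase $\lambda b=a$ should be mentioned --- it too is a contradiction, so the conclusion stands.
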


\begin{proof}
Suppose that $D_{\psi, \varphi, n}$ is $J$-symmetric. We observe  that
\begin{eqnarray} \label{m1}
JD_{\psi, \varphi, n} (K_{0}) =0
\end{eqnarray}
and  Lemma \ref{f2} shows that
\begin{eqnarray} \label{m2}
D_{\psi, \varphi, n}^{\ast} J(K_{0}) =\overline{\psi(0)}  K_{\varphi(0)} ^{(n)}.
\end{eqnarray}
Since  $D_{\psi, \varphi, n}$  is $J$-symmetric, by (\ref{m1}) and (\ref{m2}), we conclude that  $\psi(0) =0$. Assume that for $m < n-1$,
$\psi^{(m)}(0) =0$.  One can see that
\begin{eqnarray} \label{m3}
JD_{\psi, \varphi, n}  K_{0}^{(m+1)} =0.
\end{eqnarray}
On the other hand, we obtain
\begin{eqnarray} \label{m4}
\langle f,   D_{\psi, \varphi, n}^{\ast} J  K_0^{(m+1)} \rangle &=&  \langle f, D_{\psi, \varphi, n}^{\ast}  K_0^{(m+1)} \rangle \nonumber  \\
 &=& \langle  D_{\psi, \varphi, n} f , K_0^{(m+1)} \rangle  \nonumber \\
&=&  (\psi. (f^{(n)}  \circ  \varphi )) ^{(m+1)} (0) \nonumber  \\
&=& \sum_{i=0}^{m+1}
\displaystyle{m+1 \choose i}
\psi^{(m+1-i)} (0)  (f^{(n)}  \circ \varphi )^{(i)} (0) \nonumber  \\
&=& \psi^{(m+1)} (0)  f^{(n)} (\varphi(0))  \nonumber  \\
&+&  \sum_{i=1}^{m+1}
\displaystyle{m+1 \choose i}
\psi^{(m+1-i)} (0) (f^{(n)}  \circ   \varphi )^{(i)} (0) \nonumber  \\
&=& \psi^{(m+1)} (0)  f^{(n)}  (\varphi(0)) \nonumber \\
&=& \langle f, \overline{\psi^{(m+1)} (0) } K_{\varphi(0)} ^{(n)}   \rangle,
\end{eqnarray}
so
\begin{eqnarray}  \label{m5}
D^{\ast}_{\psi, \varphi, n} J K_0^{(m+1)}= D_{\psi, \varphi, n}^{\ast}  K_0^{(m+1)}  = \overline{\psi^{(m+1)} (0) } K_{\varphi(0)} ^{(n)}.
\end{eqnarray}
If  $D_{\psi, \varphi, n}$ is $J$-symmetric, then (\ref{m3}) and (\ref{m5}) imply that $\psi^{(m+1)} (0)=0$.
By the same idea which was seen  in  (\ref{m4}), we have
\begin{eqnarray}  \label{m6}
D_{\psi, \varphi, n}^{\ast}  JK_0^{(n)}   =D_{\psi, \varphi, n}^{\ast}  K_0^{(n)}  = \overline{\psi^{(n)} (0) } K_{\varphi(0)} ^{(n)},
\end{eqnarray}
since $\psi^{(m)} (0)=0$ for any $m <n$. Because
\begin{eqnarray} \label{gh1}
JD_{\psi, \varphi, n} K_0^{(n)}  = t  n!  J(\psi)
\end{eqnarray}
and $\psi$ is not identically $0$, by (\ref{m6}) and (\ref{gh1}), we see that $\psi^{(n)} (0) \not =0$.  Now suppose that  $\psi(w) =0$ for some $w \in \mathbb{D}$. Lemma \ref{f2} shows that
$D_{\psi, \varphi, n}^{\ast}J(K_{\overline{w}}) =0$.
Also
\begin{eqnarray*}
JD_{\psi, \varphi, n} (K_{\overline{w}}) = \frac{t  \overline{w^n}J(\psi)}{ (1 -  \overline{w}  J(\varphi) )^{n+\alpha+2}}.
\end{eqnarray*}
Since $D_{\psi, \varphi, n}$ is $J$-symmetric and $\psi$  is not identically zero, we  observe  that $w=0$.

Now assume that $D_{\psi, \varphi, n}$ is   $J$-symmetric and there exist nonzero distinct points $w_1$ and $w_2$ in $\mathbb{D}$ with
$\varphi (w_1) = \varphi(w_2)$.
One can easily   see that the kernel of $D_{\psi, \varphi, n}$ is the set of all polynomials with degree less than $n$.
Lemma \ref{f2} implies that
\begin{eqnarray*}
D_{\psi, \varphi, n}^{\ast} J(\psi (w_2) K_{\overline{w_1}} - \psi (w_1) K_{\overline{w_2}} )&=&
D_{\psi, \varphi, n}^{\ast}( \overline{\psi (w_2)} K_{w_1} - \overline{\psi (w_1)} K_{w_2})  \nonumber \\
 &=& \overline{\psi(w_1)  \psi(w_2)}  K_{\varphi (w_1)} ^{(n)} - \overline{\psi(w_1) \psi(w_2)}  K_{\varphi (w_2)} ^{(n)}=0. \nonumber
\end{eqnarray*}
Since $D_{\psi, \varphi, n}$ is $J$-symmetric, it follows that $\psi(w_2) K_{\overline{w_1}} - \psi(w_1) K_{\overline{w_2}} $ is a polynomial of degree less  than $n$. It shows that
\begin{eqnarray*}
\psi(w_2)  \sum_{j=n}^{\infty} \frac{\Gamma(j+2+\alpha)  ({w_1} )^j  z ^j}{j !  \Gamma(\alpha +2)} - {\psi(w_1)}  \sum_{j=n}^{\infty} \frac{\Gamma(j+2+\alpha)  ({w_2} )^j  z ^j}{j !  \Gamma(\alpha +2)} =0.
\end{eqnarray*}
Then $\psi (w_2)  {w_1}^m = \psi (w_1) {w_2}^m $ for each $m \geq n$.
We observe that
\begin{eqnarray*}
\psi(w_1)   w_2^{n+1} = \psi(w_2)   w_1^{n+1}  = \psi(w_2)  w_1^{n}  w_1 =  \psi(w_1)   w_2^{n} w_1,
\end{eqnarray*}
so $w_1=w_2$. If  either $w_1$ or $w_2$ is zero, by the open mapping theorem, we can find a pair of distinct points  $w_3$ and $w_4$, both nonzero with  $\varphi(w_3) = \varphi(w_4)$. Therefore $\varphi$ must be univalent.
\end{proof}

\begin{rem} \label{f666}
We can follow the outline of the proof of Proposition \ref{f3} to see that an  analogue of Proposition \ref{f3} holds for any normal operators $D_{\psi, \varphi,n}$.
\end{rem}

Suppose that $\varphi(z)= \frac{az+b}{cz+d}$ is a nonconstant linear fractional self-map of $\mathbb{D}$. Then the map $\sigma(z)=\frac{\overline{a}z - \overline{c}}{- \overline{b}z + \overline{d}}$ also takes $\mathbb{D}$ into itself (see \cite[Lemma 1]{S.1}).
Recall that if $\|\varphi \|_{\infty} <1$, then $\|\sigma\|_{\infty}<1$, and so $D_{\varphi,n}$ and $D_{\sigma,n}$ are bounded operators on $A_{\alpha}^2$.
Cowen \cite{S.1} found the adjoint of $C_{\varphi}$ acting on the Hardy space $H^2$. After that the adjoint of some weighted composition-differentiation operators $D_{\psi, \varphi, 1}$  on $H^2$ were investigated by the second and third authors
(see  \cite[Theorem 1]{s1}).
In the next result, we show that an analogue of \cite[Theorem 1]{s1} holds in the weighted Bergman spaces $A_{\alpha}^2$.\\ \par

\begin{prop}  \label{f4}
Suppose that $\varphi$ and $\sigma$   are the  linear fractional self-maps of $\mathbb{D}$ as described above. Then
\begin{eqnarray*}
D_{K^{(n)}_{\sigma(0),\varphi, n}} ^{\ast} =D_{{K^{(n)}_{\varphi(0), \sigma, n}}}.
\end{eqnarray*}
\end{prop}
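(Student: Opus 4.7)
My plan is to verify the operator identity by testing both sides on the reproducing kernels $\{K_w : w \in \mathbb{D}\}$, whose linear span is dense in $A_{\alpha}^{2}$. By Lemma \ref{f2}, the left-hand side applied to $K_w$ is $\overline{K^{(n)}_{\sigma(0)}(w)}\,K^{(n)}_{\varphi(w)}$, while by the definition of a weighted composition--differentiation operator, the right-hand side applied to $K_w$ is the function $z\mapsto K^{(n)}_{\varphi(0)}(z)\cdot (K_w)^{(n)}(\sigma(z))$.

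Next I would substitute the explicit formulas
\begin{eqnarray*}
K^{(n)}_u(z)=\frac{tz^n}{(1-\overline{u}z)^{n+\alpha+2}} \qquad\text{and}\qquad (K_w)^{(n)}(\zeta)=\frac{t\,\overline{w}^n}{(1-\overline{w}\zeta)^{n+\alpha+2}},
\end{eqnarray*}
after which both sides become rational expressions sharing the common prefactor $t^2\,\overline{w}^n z^n$. The operator identity thus reduces to the equality of the remaining denominators.

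The heart of the argument is the Cowen-type identity
\begin{eqnarray*}
(1-\sigma(0)\overline{w})(1-\overline{\varphi(w)}z)=(1-\overline{\varphi(0)}z)(1-\overline{w}\sigma(z)),
\end{eqnarray*}
which I would verify directly: compute $\sigma(0)=-\overline{c}/\overline{d}$ and $\varphi(0)=b/d$, write each factor with a common denominator of $\overline{d}$, and show that both products reduce to $\overline{d}^{-1}(\overline{d}-\overline{b}z)(1-\overline{w}\sigma(z))$. Raising this identity to the $(n+\alpha+2)$th power produces the required equality of denominators and completes the verification on each $K_w$, hence on all of $A_{\alpha}^{2}$.

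The main obstacle, in my view, is notational bookkeeping rather than substantive difficulty. One must carefully distinguish between $K^{(n)}_w$ in its role as the reproducing kernel for the $n$th-derivative functional (which carries a factor $z^n$) and $(K_w)^{(n)}$ in its role as the ordinary $n$th derivative of $K_w$ (which instead carries a factor $\overline{w}^n$); the two objects appear on opposite sides of the identity and look deceptively similar. Once this distinction is respected and the powers of $\overline{cw+d}$ and $\overline{d}-\overline{b}z$ are tracked correctly, the Cowen-type identity collapses the two expressions and the verification becomes routine.
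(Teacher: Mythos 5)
Your proposal is correct and follows essentially the same route as the paper: apply both operators to the reproducing kernels $K_w$, use Lemma \ref{f2} for the adjoint side and the definition of $D_{\psi,\sigma,n}$ for the other, substitute the explicit kernel formulas, and conclude by density of the span of the $K_w$. Packaging the final comparison as the Cowen-type identity $(1-\sigma(0)\overline{w})(1-\overline{\varphi(w)}z)=(1-\overline{\varphi(0)}z)(1-\overline{w}\sigma(z))$ is just a tidier way of writing the same rational-function computation the paper carries out directly.
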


\begin{proof}
We Know that
\begin{eqnarray*}
K_{\varphi(0)}^{(n)}(z) = \frac{t  z^n}{ ( 1 -  \overline{(b/d)}  z )^{n + \alpha +2}}= \frac{t   \overline{d^{n + \alpha +2}  } z^n}{  ( \overline{d}- \overline{b}z  )^{n + \alpha +2}}
\end{eqnarray*}
and
\begin{eqnarray*}
K_{\sigma(0)}^{(n)}(z) = \frac{t  z^n}{ ( 1 + (c/d)  z  )^{n + \alpha +2}}= \frac{t d^{n + \alpha +2}  z^n}{ (c  z +d  )^{n + \alpha +2}}.
\end{eqnarray*}
We see that
\begin{eqnarray}\label{a1}
D_{K^{(n)}_{\varphi(0), \sigma,n}} (K_{w})(z) &=&  T_{K_{\varphi(0)}^{(n)}}  \bigg(\frac{t  \overline{w^n}}{ (1- \overline{w}  \sigma  (z) )^{n + \alpha +2}} \bigg) \nonumber \\
&=& \frac{t^2 \overline{d^{n + \alpha +2}w^n}  z^n }{ (-\overline{b}  z + \overline{d} - \overline{w  a}  z + \overline{w  c}  )^{n + \alpha +2}}.
\end{eqnarray}
On the other hand, by Lemma \ref{f2}, we obtain
\begin{eqnarray}  \label{a2}
D_{K^{(n)}_{\sigma(0), \varphi,n}} ^{\ast} (K_{w})(z) &=&\frac{\overline{t  d^{n+\alpha +2} w^n}}{(\overline{c w}+ \overline{d} )^{n+\alpha +2}}   K_{\varphi(w)}^{(n)} (z) \nonumber \\
&=& \frac{t^2   \overline{d^{n+\alpha+2} w^n}  z^n}{ (\overline{cw} + \overline{d}- (\overline{aw} + \overline{b}) z )^{n+\alpha+2}}.
\end{eqnarray}
Since the span of the reproducing kernel functions is dense in $A_{\alpha}^2$, by  (\ref{a1}) and (\ref{a2}), the result follows.
\end{proof}

  In the next theorem, we completely describe  $J$-symmetric operators $D_{\psi, \varphi, n }$.

\begin{thm} \label{f7}
A bounded operator $D_{\psi, \varphi, n }$ is $J$-symmetric on $A_{\alpha}^2$  if and only if
\begin{eqnarray*}
\psi(z) = \frac{a}{t n!}    K_{\overline{c}}^{(n)} (z)= \frac{a  z^n}{n!   (1 - c  z)^{n + \alpha +2}}
\end{eqnarray*}
and
\begin{eqnarray*}
\varphi(z)= c + \frac{b   z}{1- c   z},
\end{eqnarray*}
where
$a=\psi^{(n)} (0)$ and  $b= \varphi^{\prime}(0)$ are both nonzero complex number and  $c=\varphi(0)$ belongs to $\mathbb{D}$.
\end{thm}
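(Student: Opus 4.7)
For the forward direction, I apply the $J$-symmetry identity $JD^*_{\psi,\varphi,n} = D_{\psi,\varphi,n}J$ to the reproducing kernel $K_w$. Proposition~\ref{f3} supplies $\psi(w) = w^n\phi(w)$ with $\phi$ analytic and $\phi(0) = a/n! \neq 0$, and univalence of $\varphi$. Using Lemma~\ref{f2} on the left-hand side, $JK_w = K_{\overline{w}}$, and the easily-verified $JK_{\varphi(w)}^{(n)} = K_{\overline{\varphi(w)}}^{(n)}$, together with the pointwise derivative $\frac{d^n}{du^n}K_{\overline{w}}(u) = tw^n/(1-wu)^{n+\alpha+2}$ on the right-hand side, yields the functional equation
\begin{equation*}
\psi(w)\cdot\frac{tz^n}{(1-\varphi(w)z)^{n+\alpha+2}} \;=\; \psi(z)\cdot\frac{tw^n}{(1-w\varphi(z))^{n+\alpha+2}}.
\end{equation*}
Dividing by $tw^n$ (using $\psi(w) = w^n\phi(w)$) and setting $w=0$ isolates $\psi(z) = \phi(0)z^n/(1-cz)^{n+\alpha+2}$ with $c = \varphi(0)$, which is exactly the claimed form of $\psi$. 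Substituting this back and extracting the $(n+\alpha+2)$-th root — valid near $(0,0)$ via the principal branch and then extended by analytic continuation — gives $(1-cw)(1-\varphi(w)z) = (1-cz)(1-w\varphi(z))$. Differentiating in $w$ at $w=0$ yields $(1-cz)\varphi(z) = c(1-cz) + bz$, so $\varphi(z) = c + bz/(1-cz)$ with $b = \varphi'(0)$; univalence of $\varphi$ forces $b \neq 0$.

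For the reverse direction, a direct computation using $(Jf)^{(n)} = J(f^{(n)})$ establishes the general identity $JD_{\psi,\varphi,n}J = D_{J\psi,J\varphi,n}$, so $J$-symmetry amounts to $D^*_{\psi,\varphi,n} = D_{J\psi,J\varphi,n}$. For the prescribed $\psi$ and $\varphi$, one verifies $J\psi = (\overline{a}/(tn!))K_c^{(n)}$ and that $J\varphi$ coincides with the companion linear fractional map $\sigma$ of $\varphi$, whence $\sigma(0) = \overline{c}$ and $K_{\sigma(0)}^{(n)} = K_{\overline{c}}^{(n)}$ is exactly the kernel defining $\psi$. Proposition~\ref{f4} then immediately supplies
\begin{equation*}
D^*_{\psi,\varphi,n} = \frac{\overline{a}}{tn!}D^*_{K_{\sigma(0)}^{(n)},\varphi,n} = \frac{\overline{a}}{tn!}D_{K_{\varphi(0)}^{(n)},\sigma,n} = D_{J\psi,J\varphi,n},
\end{equation*}
which is the desired identity.

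The main obstacle I anticipate is disciplined bookkeeping in the forward direction: the symbol $K_w^{(n)}$ denotes the reproducing kernel for the functional $f \mapsto f^{(n)}(w)$, not the pointwise $n$-th derivative of $K_w$, and computing $D_{\psi,\varphi,n}(K_{\overline{w}})$ correctly hinges on the latter. Extracting the $(n+\alpha+2)$-th root when $\alpha$ is not an integer is a secondary technicality, handled by the principal branch on a neighborhood of $1$ combined with analytic continuation.
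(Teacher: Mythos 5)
Your proposal is correct, and the converse direction is essentially the paper's argument (write $\psi$ as a multiple of $K^{(n)}_{\sigma(0)}$, observe $J\psi=\frac{\overline a}{tn!}K^{(n)}_{\varphi(0)}$ and $J\varphi=\sigma$, and invoke Proposition~\ref{f4}); your reformulation of $J$-symmetry as $D^{\ast}_{\psi,\varphi,n}=D_{J\psi,J\varphi,n}$ via $JD_{\psi,\varphi,n}J=D_{J\psi,J\varphi,n}$ is just a cleaner packaging of the paper's pointwise computation. The forward direction, however, takes a genuinely different route. The paper tests the symmetry relation only on the two derivative-evaluation kernels $K_0^{(n)}$ and $K_0^{(n+1)}$: the first (via (\ref{m6}) and (\ref{gh1})) yields $\psi=\frac{a}{tn!}K^{(n)}_{\overline{c}}$, and the second, after computing $\psi^{(n+1)}(0)$ from that formula, isolates $J\varphi$ and hence $\varphi$; no fractional powers ever need to be compared. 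You instead test on all point kernels $K_w$, obtaining the two-variable identity $\psi(w)\,tz^n(1-\varphi(w)z)^{-(n+\alpha+2)}=\psi(z)\,tw^n(1-w\varphi(z))^{-(n+\alpha+2)}$, recover $\psi$ by setting $w=0$ (using Proposition~\ref{f3}(i)--(ii) to write $\psi(w)=w^n\phi(w)$, exactly as the paper also relies on Proposition~\ref{f3}), and then recover $\varphi$ by extracting the $(n+\alpha+2)$-th root and differentiating at $w=0$. Your route is arguably more conceptual — the single functional equation carries all the information, and you avoid the paper's Taylor-coefficient bookkeeping with $K_0^{(n+1)}$ — at the price of the branch issue for the non-integer exponent, which you handle correctly: all factors lie in the disk $\{|u-1|<1\}$, so principal-branch powers multiply, and equality of the $s$-th powers near $(0,0)$ plus the identity theorem (or continuity of the integer-valued argument discrepancy) gives $(1-cw)(1-\varphi(w)z)=(1-cz)(1-w\varphi(z))$ on all of $\mathbb{D}\times\mathbb{D}$. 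One tiny remark: you only need the standing nonconstancy of $\varphi$ (rather than univalence from Proposition~\ref{f3}(iv)) to conclude $b=\varphi'(0)\neq 0$.
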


\begin{proof}
Suppose that $D_{\psi, \varphi, n} $ is $J$-symmetric.
By (\ref{m6}), (\ref{gh1}) and Proposition \ref{f3},
we conclude that  $J(\psi) = \frac{\overline{\psi^{(n)} (0)}}{t   n!}  K_{\varphi(0)}^{(n)} $ and so
$\psi= \frac{\psi^{(n)} (0)}{t   n!}  K_{\overline{\varphi(0)}} ^{(n)}$, where $\psi^{(n)}(0) \not =0 $.
 It  follows that
\begin{eqnarray} \label{m23}
\psi^{(n+1)} (0) = (n+1)   (n+\alpha+2)   \varphi(0)   \psi^{(n)} (0).
\end{eqnarray}
We have
\begin{eqnarray} \label{m24}
JD_{\psi, \varphi, n}  ( K_0^{(n+1)}) (z) &=& t   (n+1) !  (\alpha +n +2) J(\psi) (z)  J (\varphi) (z) \nonumber \\
&=& \frac{t   (n+1)  (n+\alpha+2)   \overline{\psi^{(n)}(0)}   z^n}{ (1- \overline{\varphi(0)}  z  )^{n+\alpha +2}}   J(\varphi) (z).
\end{eqnarray}
Moreover by  Proposition \ref{f3}(i), (\ref{m23}) and the proof of (\ref{m4}), we observe that
\begin{eqnarray} \label{m25}
 D_{\psi, \varphi, n}^{\ast}     J   ( K_0^{(n+1)})(z) &=&  D_{\psi, \varphi, n}^{\ast} ( K_0^{(n+1)})(z)  \nonumber \\
&=& \overline{\psi^{(n+1)}(0)}  K_{\varphi(0)} ^{(n)} (z)+  (n+1)   \overline{\psi^{(n)} (0)   \varphi^{\prime}(0)}   K_{\varphi(0)} ^{(n+1)} (z) \nonumber \\
&=& \frac{t   (n+1)  (n+ \alpha+2)   \overline{\varphi(0)  \psi^{(n)} (0)} z^n}{ (1- \overline{\varphi(0)} z )^{n+ \alpha+2}} \nonumber \\
&+& \frac{t   (n+1)   (n+ \alpha+2)   \overline{ \psi^{(n)} (0)   \varphi^{\prime}(0)  } z^{n+1}}{(1- \overline{\varphi(0)} z )^{n+ \alpha+3}}.
\end{eqnarray}
Because $ D_{\psi, \varphi, n}$  is $J$-symmetric, it follows from (\ref{m24}) and (\ref{m25}) that
\begin{eqnarray*}
J(\varphi)(z) = \overline{\varphi(0)} + \frac{\overline{\varphi^{\prime}(0)}  z }{1- \overline{\varphi(0)}  z}
\end{eqnarray*}
and so
\begin{eqnarray*}
\varphi  (z) =\varphi(0) + \frac{\varphi^{\prime}(0)   z }{1- \varphi(0)   z},
\end{eqnarray*}
with $\varphi^{\prime}(0) \not =0$ because $\varphi$ is nonconstant.

Conversely,  take $\psi$ and $\varphi$ as in the statement of  the theorem. For each $f \in A_{\alpha}^2$, we have
\begin{eqnarray} \label{f777}
JD_{\psi, \varphi,n} (f) (z) = J(\psi)(z) J (f^{(n)} (\varphi(z))) = J(\psi)(z) \overline{f^{(n)} (\varphi( \overline{z}))}.
\end{eqnarray}
On the other hand, by Proposition \ref{f4}, we see that
\begin{eqnarray} \nonumber
 D_{\psi, \varphi,n}^{\ast} J =  \frac{ \overline{a}}{{n! t}}D_{K_{\sigma(0)}^{(n)},\varphi,n}J=\frac{ \overline{a}}{{n! t}}D_{K_{\varphi(0)}^{(n)},\sigma,n}J
\end{eqnarray}
Then
\begin{eqnarray} \label{f888}
D_{\psi, \varphi,n}^{\ast} J (f)(z) = \frac{ \overline{a}}{{n! t}} K_{\varphi(0)}^{(n)}(z) \overline{f^{(n)}(\overline{\sigma(z)})} = J(\psi)(z) \overline{f^{(n)} (\varphi( \overline{z}))}.
\end{eqnarray}
Therefore by (\ref{f777}) and (\ref{f888}), the operator $D_{\psi, \varphi,n}$ is $J$-symmetric.
\end{proof}

We infer from  the paragraph after Corollary \ref{f1}, \cite[Lemma 4.8]{s5} and  the proof of \cite[Theorem 4.10]{s5}
that an operator $ D_{\psi, \varphi, n}$ from Proposition  \ref{f4} is bounded on
$A_{\alpha}^2$   if $2 |c+ \overline{c} (b-c^2)|< 1- |b-c^2|^2$.

By a similar idea as stated in the proof of  \cite[Proposition 2.1]{s0} (see also \cite[Theorem 4.1]{s55}),  we remark that $C_{\psi, \varphi}$ is unitary and $J$-symmetric on $A_{\alpha}^2$ if and only if either
\begin{eqnarray} \label{a44}
\psi(z)= \frac{\alpha (1- |p|^2)^{\frac{\alpha+2}{2}}}{(1- \overline{p}z)^{\alpha+2}}
\end{eqnarray}
and
\begin{eqnarray} \label{a444}
\varphi(z)= \frac{\overline{p}}{p} \frac{p-z}{1-\overline{p}z},
\end{eqnarray}
where $p \in \mathbb{D} \setminus \{0\}$ and $|\alpha|=1$  or
$\psi \equiv \mu$ and $\varphi(z)= \lambda z$, where $|\mu |=|\lambda |=1$.
In the case that $p \not =0$, we denote the linear functional transformations in
(\ref{a44}) and (\ref{a444}) by $\psi_p$ and
$\varphi_p$, respectively.  Invoking \cite[Lemma 2.2]{s0}, we observe that $C_{\lambda z} J$ and $C_{\psi_p, \varphi_p}J$ are conjugations. Next, we characterize complex symmetric operators $D_{\psi, \varphi, n}$ with conjugations $C_{\lambda z} J$ and $C_{\psi_p, \varphi_p}J$.\\ \par

\begin{thm} \label{f9}
Suppose  that $\tilde{\varphi}(z)= c + \frac{bz}{1-cz}$
and  that
$\tilde{\psi}(z)= \frac{az^n}{n!(1-cz)^{n+\alpha+2}}$, where $a,b \in \Bbb{C} \setminus \{0\}$ and $c \in \Bbb{D}$. Assume that $D_{\tilde{\psi}, \tilde{\varphi}, n}$ is bounded on $A_{\alpha}^2$.
\begin{itemize}
\item[$(1)$]
For $p \not =0$, an operator $D_{\psi, \varphi,n}$ on $A_{\alpha}^2$ is complex symmetric with conjugation
$C_{\psi_p, \varphi_p}J$  if and only if
$\varphi= \tilde{\varphi} \circ \varphi_p$ and $\psi= \psi_p. (\tilde{\psi} \circ  \varphi_p)$ for some $\tilde{\varphi}$ and $\tilde{\psi}$.
\item[$(2)$]
For $|\mu|=|\lambda|=1$, an operator $D_{\psi, \varphi,n}$ on $A_{\alpha}^2$
 is complex symmetric with conjugation $C_{\mu, \lambda z} J$ if and only if  $\psi(z)=\mu  \tilde{\psi}(\lambda z)$ and $\varphi(z)= \tilde{\varphi}(\lambda z)$  for some $\tilde{\varphi}$ and $\tilde{\psi}$.
\end{itemize}
\end{thm}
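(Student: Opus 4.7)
The plan is to reduce Theorem \ref{f9} to Theorem \ref{f7} via a conjugation-factorization trick. First I would establish a general fact: if $W$ is a unitary operator on $A_{\alpha}^2$ such that $WJ$ is a conjugation, then a bounded operator $T$ is $WJ$-symmetric if and only if $W^{-1}T$ is $J$-symmetric. Indeed, multiplying the identity $(WJ)T^*(WJ)=T$ on the left by $W^{-1}$ gives $JT^*WJ=W^{-1}T$, which, using $W^*=W^{-1}$, is exactly the statement $J(W^{-1}T)^*J=W^{-1}T$; every step is reversible. By the discussion preceding the theorem, both conjugations of interest are of this form, with $W=C_{\psi_p,\varphi_p}$ in part $(1)$ and $W=C_{\mu,\lambda z}$ in part $(2)$, each of which is unitary on $A_{\alpha}^2$.

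The second ingredient is a factorization identity. For $\tilde{\varphi}$ and $\tilde{\psi}$ as in the statement, a direct verification using the definitions of the weighted composition and weighted composition--differentiation operators yields
\begin{equation*}
D_{\psi_p\cdot(\tilde{\psi}\circ\varphi_p),\,\tilde{\varphi}\circ\varphi_p,\,n}=C_{\psi_p,\varphi_p}\,D_{\tilde{\psi},\tilde{\varphi},n},
\end{equation*}
and analogously $D_{\mu\tilde{\psi}(\lambda\cdot),\,\tilde{\varphi}(\lambda\cdot),\,n}=C_{\mu,\lambda z}\,D_{\tilde{\psi},\tilde{\varphi},n}$. This is just unwinding $\bigl(\tilde{\psi}\cdot(f^{(n)}\circ\tilde{\varphi})\bigr)\circ\varphi_p$ and $\bigl(\tilde{\psi}\cdot(f^{(n)}\circ\tilde{\varphi})\bigr)(\lambda z)$.

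With these two facts in hand the proof is routine. For sufficiency, Theorem \ref{f7} says $D_{\tilde{\psi},\tilde{\varphi},n}$ is $J$-symmetric; combining with the factorization and the reduction lemma yields $C_{\psi_p,\varphi_p}J$-symmetry (respectively $C_{\mu,\lambda z}J$-symmetry) of $D_{\psi,\varphi,n}$. For necessity, the reduction lemma gives that $C_{\psi_p,\varphi_p}^{-1}D_{\psi,\varphi,n}$ is $J$-symmetric; since $\psi_p$ is nonvanishing on $\mathbb{D}$ and $\varphi_p$ is a disc automorphism, $C_{\psi_p,\varphi_p}^{-1}$ is itself a weighted composition operator with symbols $1/(\psi_p\circ\varphi_p^{-1})$ and $\varphi_p^{-1}$, and a direct computation identifies $C_{\psi_p,\varphi_p}^{-1}D_{\psi,\varphi,n}$ with $D_{\tilde{\psi},\tilde{\varphi},n}$ for $\tilde{\varphi}=\varphi\circ\varphi_p^{-1}$ and $\tilde{\psi}=(\psi/\psi_p)\circ\varphi_p^{-1}$. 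Theorem \ref{f7} then forces $\tilde{\psi},\tilde{\varphi}$ into the specified form, and composing with $\varphi_p$ on the right recovers the stated relations for $\psi$ and $\varphi$. Part $(2)$ is handled identically, with the automorphism $\lambda z$ (inverse $z/\lambda$) in place of $\varphi_p$ and the unimodular constant $\mu$ in place of $\psi_p$.

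The main obstacle is recognizing and proving the reduction lemma cleanly; once it is in place, everything else is algebraic bookkeeping that uses Theorem \ref{f7} as a black box. Boundedness is automatic: since $W$ is unitary, $D_{\psi,\varphi,n}$ is bounded if and only if $D_{\tilde{\psi},\tilde{\varphi},n}$ is, matching the theorem's standing hypothesis.
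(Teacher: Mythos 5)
Your proposal is correct and takes essentially the same route as the paper: factor $D_{\psi, \varphi,n}= C_{\psi_p, \varphi_p} D_{\tilde{\psi}, \tilde{\varphi},n}$ (resp.\ $C_{\mu,\lambda z}D_{\tilde{\psi}, \tilde{\varphi},n}$), reduce complex symmetry with conjugation $C_{\psi_p,\varphi_p}J$ to $J$-symmetry of the operator with the unitary factor removed, and then apply Theorem \ref{f7}. The only difference is bookkeeping: the paper invokes \cite[Proposition 2.3]{s0} for the reduction step, whereas you prove that step directly as your ``reduction lemma,'' and you also spell out that $C_{\psi_p,\varphi_p}^{-1}D_{\psi,\varphi,n}$ is again a weighted composition--differentiation operator (with symbols $(\psi/\psi_p)\circ\varphi_p^{-1}$ and $\varphi\circ\varphi_p^{-1}$), a point the paper leaves implicit when it applies Theorem \ref{f7}.
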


\begin{proof}
(1) Let $p\neq 0$. Suppose that  $D_{\psi, \varphi,n}$  is $C_{\psi_p, \varphi_p}J$-symmetric. As we mentioned in the paragraph before the statement of Theorem \ref{f9}, the operator   $C_{\psi_p, \varphi_p}^{\ast}$
is unitary and $J$-symmetric, so it is not hard to see that $C_{\psi_p, \varphi_p}^{\ast}$ is $C_{\psi_p, \varphi_p}J$-symmetric.
 Then \cite [Proposition 2.3]{s0} implies that $C_{\psi_p, \varphi_p}^{\ast} D_{\psi, \varphi,n}$
is $J$-symmetric. It results  from Theorem \ref{f7} that there is a $J$-symmetric operator $D_{\tilde{\psi}, \tilde{\varphi},n}$ so that $D_{\psi, \varphi, n}= C_{\psi_p, \varphi_p} D_{\tilde{\psi}, \tilde{\varphi},n}$.
 Hence we observe that
$\varphi= \tilde{\varphi} \circ \varphi_p$ and $\psi= \psi_p. (\tilde{\psi} \circ \varphi_p)$.

Conversely, suppose that $\varphi= \tilde{\varphi} \circ \varphi_p$ and $\psi= \psi_p. (\tilde{\psi} \circ \varphi_p)$ for some $\tilde{\varphi}$ and $\tilde{\psi}$.
Then $D_{\psi, \varphi,n}= C_{\psi_p, \varphi_p} D_{\tilde{\psi}, \tilde{\varphi},n}$. Since  the weighted composition operator
$ C_{\psi_p, \varphi_p} $ is unitary and $J$-symmetric and  the operator  $D_{\tilde{\psi}, \tilde{\varphi},n}$ is $J$-symmetric too (see Theorem \ref{f7}), the operator $D_{\psi, \varphi, n}$ is
$ C_{\psi_p, \varphi_p}J$-symmetric by \cite[Proposition 2.3]{s0}.

$(2)$ The result follows immediately from the technique as  stated in the proof of Part $\hbox{(1)}$.
\end{proof}

\section{Some examples of complex symmetric operators}

   In this section, we see that the class of $J$-symmetric and $C_{\lambda z}J$-symmetric  $D_{\psi, \varphi, n}$ contain self-adjoint $D_{\psi, \varphi, n}$ and some normal operators $D_{\psi, \varphi, n}$.
   In the next proposition, we obtain a characterization of   self-adjoint weighted composition-differentiation operators of order $n$  on $A^{2}_{\alpha}$.

\begin{prop} \label{f6}
A bounded operator $D_{\psi, \varphi, n}$ is self-adjoint  on $A_{\alpha}^{2}$  if and only if
\begin{eqnarray*}
\psi(z) = \frac{a   z^n}{n! (1 - \overline{c}   z)^{n + \alpha+2}}= \frac{a}{t n!} K_c^{(n)} (z)
\end{eqnarray*}
and
\begin{eqnarray*}
\varphi(z) = c+ \frac{b  z}{1 - \overline{c}   z},
\end{eqnarray*}
where $a= \psi^{(n)} (0)$ and $b= \varphi^{\prime}(0)$ are both nonzero real numbers  and $c= \varphi(0)$ belongs to $\mathbb{D}$.
 Furthermore, for the self-adjoint operator  $D_{\psi, \varphi,n}$ either of the following holds:
\begin{itemize}
\item[$i)$] If $c=0$, then $D_{\psi, \varphi,n}$ is $J$-symmetric.
\item[$ii)$]  If $c \not =0$, then $D_{\psi, \varphi,n}$ is $C_{e^{-2 i \theta}z}J$-symmetric, where $\theta=\mbox{Arg}(c)$.
\end{itemize}
\end{prop}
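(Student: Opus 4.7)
The first step is to observe that self-adjointness implies normality, so Remark~\ref{f666} gives the conclusions of Proposition~\ref{f3}: $\psi^{(m)}(0)=0$ for $0\le m<n$, $\psi^{(n)}(0)\neq 0$, $\psi$ is nonvanishing on $\mathbb{D}\setminus\{0\}$, and $\varphi$ is univalent. Setting $a=\psi^{(n)}(0)$ and $c=\varphi(0)$, I would apply $D_{\psi,\varphi,n}=D_{\psi,\varphi,n}^{\ast}$ to $K_0^{(n)}$. Since $K_0^{(n)}(z)=tz^n$, the left-hand side equals $tn!\,\psi$. For the right-hand side I use formula (\ref{m6}), whose derivation only requires $\psi^{(m)}(0)=0$ for $m<n$, giving $\bar a\,K_c^{(n)}$. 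Comparing and matching the $n$-th Taylor coefficient forces $a=\bar a$, so $a\in\mathbb{R}\setminus\{0\}$ and $\psi(z)=az^n/[n!(1-\bar c z)^{n+\alpha+2}]$ as claimed.

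Next I would apply self-adjointness to $K_0^{(n+1)}$. The left-hand side is $D_{\psi,\varphi,n}(K_0^{(n+1)})=t(n+\alpha+2)(n+1)!\,\psi\cdot\varphi$, while the right-hand side, by the Leibniz calculation underlying (\ref{m25}), equals $\overline{\psi^{(n+1)}(0)}\,K_c^{(n)}+(n+1)\overline{a\varphi'(0)}\,K_c^{(n+1)}$. Substituting the known form of $\psi$ and dividing out the common factor $tz^n/(1-\bar c z)^{n+\alpha+2}$ reduces the equality to $a(n+1)(n+\alpha+2)\varphi(z)=\overline{\psi^{(n+1)}(0)}+(n+1)(n+\alpha+2)\overline{a\varphi'(0)}z/(1-\bar c z)$. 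Evaluating at $z=0$ fixes $\psi^{(n+1)}(0)$, and using $a\in\mathbb{R}$ the remainder yields $\varphi(z)=c+\overline{\varphi'(0)}z/(1-\bar c z)$. Since $b:=\varphi'(0)$ must equal $\overline{\varphi'(0)}$, it is real and nonzero. For the converse, I would write $\varphi(z)=((b-|c|^2)z+c)/(-\bar c z+1)$ as a linear fractional transformation and verify, using $b\in\mathbb{R}$, that the associated map $\sigma$ from the paragraph preceding Proposition~\ref{f4} coincides with $\varphi$ itself and satisfies $\sigma(0)=c=\varphi(0)$; Proposition~\ref{f4} then gives $D_{K_c^{(n)},\varphi,n}^{\ast}=D_{K_c^{(n)},\varphi,n}$, and multiplying by the real scalar $a/(tn!)$ yields $D_{\psi,\varphi,n}^{\ast}=D_{\psi,\varphi,n}$.

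For part (i), when $c=0$ the formulas collapse to $\psi(z)=az^n/n!$ and $\varphi(z)=bz$, which is exactly the $J$-symmetric form in Theorem~\ref{f7} with the parameter $c$ there set to $0$, so $D_{\psi,\varphi,n}$ is $J$-symmetric. For part (ii), with $\theta=\mbox{Arg}(c)$ and $\lambda=e^{-2i\theta}$, I would look for $\tilde\psi,\tilde\varphi$ of the $J$-symmetric form from Theorem~\ref{f7} satisfying $\psi(z)=\tilde\psi(\lambda z)$ and $\varphi(z)=\tilde\varphi(\lambda z)$. Matching denominators, numerators and leading coefficients yields $\tilde c=\bar c e^{2i\theta}=c\in\mathbb{D}$, $\tilde b=be^{2i\theta}$, $\tilde a=ae^{2in\theta}$, all satisfying the hypotheses of Theorem~\ref{f7}; boundedness of $D_{\tilde\psi,\tilde\varphi,n}$ follows from the identity $D_{\psi,\varphi,n}=C_{\lambda z}D_{\tilde\psi,\tilde\varphi,n}$ with $C_{\lambda z}$ unitary. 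Theorem~\ref{f9}(2) with $\mu=1$ then delivers the desired $C_{e^{-2i\theta}z}J$-symmetry. The main obstacle is in the forward direction: the reality of both $a$ and $b$ must be extracted cleanly from the $K_0^{(n)}$ and $K_0^{(n+1)}$ identities, and this is precisely what distinguishes the self-adjoint case from the broader $J$-symmetric characterization of Theorem~\ref{f7}.
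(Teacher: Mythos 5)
Your proposal is correct and follows essentially the same route as the paper: necessity is extracted by testing self-adjointness on $K_0^{(n)}$ and $K_0^{(n+1)}$ (giving first the form of $\psi$ and reality of $a$, then the form of $\varphi$ and reality of $b$), sufficiency comes from Proposition \ref{f4} after observing $\sigma=\varphi$ and $\sigma(0)=c$, and part (ii) uses exactly the paper's rotated symbols $\tilde\psi,\tilde\varphi$ with $\tilde a=ae^{2in\theta}$, $\tilde b=be^{2i\theta}$, $\tilde c=c$. The only cosmetic difference is that you conclude part (ii) by invoking Theorem \ref{f9}(2) with $\mu=1$ (checking boundedness of $D_{\tilde\psi,\tilde\varphi,n}$ via the unitary $C_{\lambda z}$), whereas the paper applies the lemmas of \cite{s0} directly, which is the same underlying argument.
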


\begin{proof}
Suppose that  $D_{\psi, \varphi, n}$ is self-adjoint on $A_{\alpha}^2$.  By  (\ref{m4}) and Remark \ref{f666},  we have
$D_{\psi, \varphi, n }^{\ast} K_0 ^{(n)} = \overline{\psi^{(n)}(0)}   K_{\varphi(0)}^{(n)}$.
Moreover, we can see that
$D_{\psi, \varphi, n }  K_0 ^{(n)}(z) = D_{\psi, \varphi, n } (t z^n ) = t n! \psi (z)$.
Since $D_{\psi, \varphi, n}$ is self-adjoint, we conclude that
\begin{eqnarray} \label{m16}
\psi(z) = \frac{\overline{\psi^{(n)} (0)}}{t  n!}   K_{\varphi(0)}^{(n)}(z)= \frac{\overline{\psi^{(n)} (0)} z^n}{n!   ( 1 - \overline{\varphi(0)} z )^{n+\alpha+2}}.
\end{eqnarray}
Differentiating  both sides of (\ref{m16}) $n$ times with respect to $z$,  we obtain
\begin{eqnarray} \label{m17}
\psi^{(n)} (z) =\frac{ \overline{\psi^{(n)} (0)}}{n!} \sum_{i=0}^n
\displaystyle{n \choose i}
 \frac{n!}{i!}  z^i  \bigg( \frac{1}{(1- \overline{\varphi(0)} z)^{n+\alpha +2}} \bigg)^{(i)}.
\end{eqnarray}
It results from  (\ref{m17})  that
$\psi^{(n)} (0) = \overline{\psi^{(n)} (0)}$ and so $\psi^{(n)} (0)$ is real. Moreover, note that $\psi^{(n)}(0)\neq 0$ since $\psi$ is not identically $0$.
On the other hand, differentiating the left side and the right side of (\ref{m16}) $n+1$ times with respect to $z$ yields
\begin{eqnarray} \label{a3}
\psi^{(n+1)} (0) = (n+1)   (n+\alpha +2)    \overline{\varphi(0)}   \psi^{(n)} (0).
\end{eqnarray}
We can see that
\begin{eqnarray} \label{m18}
D_{\psi, \varphi, n} (K_0^{(n+1)} ) (z)  &=& D_{\psi, \varphi, n}  (t (n+ \alpha+2)   z^{n+1} )  \nonumber \\
&=&  \frac{t   (n+1)   (n+ \alpha+2)   \psi^{(n)}(0)   z^n}{(1- \overline{\varphi(0)}   z )^{n+ \alpha+2}}    \varphi(z).
\end{eqnarray}
On the other hand, by the idea  as stated in (\ref{f3}) and the fact that for each $m<n$, $\psi^{(m)}(0) =0$ (see Remark \ref{f666}),  we have
\begin{eqnarray} \label{m19}
D_{\psi, \varphi, n}^{\ast} (K_0^{(n+1)} ) (z) &=&  \overline{\psi^{(n+1)}(0)}   K_{\varphi(0)}^{(n)} (z) + (n+1)  \overline{\psi^{(n)}(0) \varphi^{\prime}(0)}    K_{\varphi(0)}^{(n+1)}(z)  \nonumber \\
&=& \frac{t  \overline{\psi^{(n+1)} (0)}  z^n}{(1- \overline{\varphi(0)} z )^{n + \alpha +2}} \nonumber \\
&+& \frac{(n+1) \overline{ \psi^{(n)}(0)  \varphi^{\prime}(0)} t ( n+ \alpha+2)   z^{n+1} }{(1- \overline{\varphi(0)} z  )^{n + \alpha +3}}.
\end{eqnarray}
Since $D_{\psi, \varphi, n}$ is self-adjoint, by calling   (\ref{a3}), (\ref{m18}) and (\ref{m19}), we get
\begin{eqnarray} \label{m20}
\varphi(z) = \varphi(0) + \frac{\overline{\varphi^{\prime}(0)}   z}{1- \overline{\varphi(0)} z }.
\end{eqnarray}
Differentiating both sides of  (\ref{m20})  with respect to $z$ and  then taking $z=0$, we observe that $\varphi^{\prime}(0)$ is also real.
In addition, because $\varphi$ is not constant, we see that $\varphi^{\prime}(0)  \not =0 $.

For the converse, suppose that $\varphi$ and $\psi$ are as in the statement of the  proposition and $C_{\psi, \varphi,n}$ is bounded on $A_{\alpha}^2$.  Proposition \ref{f4} dictates that
\begin{eqnarray*}
D_{\psi, \varphi, n}^{\ast}= \frac{\overline{a}}{tn!}D^{\ast}_{K_{\sigma(0)}^{(n)},\varphi,n}=\frac{\overline{a}}{tn!}
D_{K_{\varphi(0)}^{(n)},\sigma,n}=D_{\psi, \varphi, n}.
\end{eqnarray*}
Then $D_{\psi, \varphi, n}$ is self-adjoint.

We infer from Theorem \ref{f7} that for  the case $c=0$,  the operator $D_{\psi, \varphi,n}$ is $J$-symmetric.
Now let $c \not =0$.  Set  $\tilde{\psi}(z)= \frac{a e^{2n i \theta} z^n}{n! (1- cz)^{n+\alpha+2}}$
and  $\tilde{\varphi}(z)=c+ \frac{b e^{2 i \theta} z}{1- cz}$.
From Theorem \ref{f7}, the operator  $ D_{\tilde{\psi}, \tilde{\varphi},n}$ is  $J$-symmetric. By
\cite[Lemma 2.2]{s0} and \cite[Proposition 2.3]{s0}, we observe that  $C_{e^{-2 i \theta}z} D_{\tilde{\psi}, \tilde{\varphi},n}$
is  $C_{e^{-2 i \theta}z}J$-symmetric.
(note that as stated in the paragraph before Theorem \ref{f9}, the composition operator   $C_{e^{-2 i \theta}z}$ is unitary and  $J$-symmetric.) A direct computation shows that
$ C_{e^{-2 i \theta}z} D_{\tilde{\psi}, \tilde{\varphi},n}= D_{\psi, \varphi,n}$, so the result follows.
\end{proof}

 Now we characterize those   operators  $D_{\psi, \varphi, n}$ on $A_{\alpha}^2$ that are normal when $0$ is the fixed point of $\varphi$.

\begin{prop} \label{f5}
Suppose that  an operator $D_{\psi, \varphi, n}$ is bounded on  $A_{\alpha}^2$ and that $\varphi(0)=0$. Then $D_{\psi, \varphi, n}$ is normal if and only if $\psi(z) = a  z^n$ and $\varphi(z)= b  z$, where $a \in \Bbb{C}  \setminus \{0\}$ and $b$ belongs to
$\mathbb{D} \setminus  \{0 \}$.
Moreover, in this case   $D_{\psi, \varphi, n}$ is $J$-symmetric.
\end{prop}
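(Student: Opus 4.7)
The plan is to leverage the fact that, for a normal operator $D=D_{\psi,\varphi,n}$, any eigenvector of $D^{\ast}$ with eigenvalue $\lambda$ is also an eigenvector of $D$ with eigenvalue $\bar\lambda$. Remark \ref{f666} first hands us the structural constraints of Proposition \ref{f3}: $\psi^{(m)}(0)=0$ for $0\leq m<n$, $\psi^{(n)}(0)\neq 0$, $\psi$ is zero-free on $\mathbb{D}\setminus\{0\}$, and $\varphi$ is univalent. The assumption $\varphi(0)=0$ then makes the adjoint formulas (\ref{m6}) and (\ref{m25}) from the proof of Theorem \ref{f7} yield honest eigenvalue equations for $K_0^{(n)}$ and $K_0^{(n+1)}$, and this is what drives the computation.

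Concretely, (\ref{m6}) specialised at $\varphi(0)=0$ reads $D^{\ast}(K_0^{(n)})=\overline{\psi^{(n)}(0)}\,K_0^{(n)}$, so $K_0^{(n)}$ is a $D^{\ast}$-eigenvector; normality promotes it to a $D$-eigenvector with eigenvalue $\psi^{(n)}(0)$. Since $K_0^{(n)}$ is a constant multiple of $z^n$, the direct evaluation $D(K_0^{(n)})$ is a nonzero scalar multiple of $\psi$, and matching the two sides forces $\psi(z)=az^n$ for some $a\neq 0$. Plugging this $\psi$ into (\ref{m25}) with $\varphi(0)=0$ and $\psi^{(n+1)}(0)=0$ collapses the formula to $D^{\ast}(K_0^{(n+1)})\propto K_0^{(n+1)}$, so the same normality argument gives $D(K_0^{(n+1)})\propto K_0^{(n+1)}$. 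But a direct computation shows $D(K_0^{(n+1)})(z)$ is a nonzero scalar multiple of $z^n\varphi(z)$, and comparing with $z^{n+1}$ yields $\varphi(z)=\varphi'(0)z=bz$; here $b\neq 0$ because $\varphi$ is nonconstant, and $|b|<1$ follows from boundedness of $D$ (if $|b|=1$ the sequence $\|D(z^k)\|/\|z^k\|$ computed below grows like $k^n$). The converse is a direct diagonalisation: with $\psi(z)=az^n$ and $\varphi(z)=bz$, one checks $D(z^k)=0$ for $k<n$ and $D(z^k)=\frac{a\,k!\,b^{k-n}}{(k-n)!}\,z^k$ for $k\geq n$, so $D$ is diagonal with respect to the orthogonal basis $\{z^k\}_{k\geq 0}$ of $A_{\alpha}^2$ and therefore normal.

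Finally, the ``moreover'' statement is immediate from Theorem \ref{f7}: setting $c=\varphi(0)=0$ in the formulas of that theorem reduces them to $\psi(z)=(\psi^{(n)}(0)/n!)z^n$ and $\varphi(z)=\varphi'(0)z$, which is exactly the form we have derived, so $D$ is $J$-symmetric. The only delicate point in the whole argument is the second eigenvector step, where one must track factorials and the factor $\beta(n+1)^{-2}$ arising from $(K_0^{(n+1)})^{(n)}$ to read off $\varphi'(0)$ correctly; this bookkeeping can be sidestepped by working with the monomials $z^n$ and $z^{n+1}$ directly rather than the scaled reproducing-kernel derivatives, which eliminates the $\beta$-factors altogether.
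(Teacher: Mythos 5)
Your proof is correct and follows essentially the same route as the paper: both arguments test normality on the kernel vectors $K_0^{(n)}$ and $K_0^{(n+1)}$ (relying on Remark \ref{f666} for $\psi^{(m)}(0)=0$, $m<n$), extract $\psi(z)=az^{n}$ and $\varphi(z)=bz$ from the resulting identities, and deduce the $J$-symmetry from Theorem \ref{f7} with $c=0$. Your deviations are only cosmetic refinements of the same computations: eigenvector promotion for $K_0^{(n)}$ where the paper compares $\|D K_0^{(n)}\|$ with $\|D^{\ast}K_0^{(n)}\|$ coefficientwise, a diagonalization over the monomials $z^{k}$ for the converse where the paper computes $DD^{\ast}$ and $D^{\ast}D$ via Proposition \ref{f4}, and an explicit justification (via the growth of $\|D(z^{k})\|/\|z^{k}\|$) that boundedness forces $|b|<1$, a point the paper leaves implicit.
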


\begin{proof}
Assume that  $D_{\psi, \varphi, n}$ is normal on  $A_{\alpha}^2$. We can see that
\begin{eqnarray} \label{m8}
\|D_{\psi, \varphi, n}   K_0^{(n)} \|^2 =  \bigg \|  \left(\frac{n!}{\beta(n)}\right)^{2}  \psi  \bigg \|^2
= \bigg( \frac{n!}{\beta(n)}\bigg )^4  \sum_{j=0}^{\infty} \bigg(\frac{\beta(j)}{j !}  \bigg)^2  |\psi^{(j)} (0) |^2.
\end{eqnarray}
On the other hand,  by (\ref{m4}) and Remark \ref{f666}, we observe that
\begin{eqnarray} \label{m9}
\|D_{\psi, \varphi, n} ^{\ast}  K_0^{(n)} \|^2  = \| \overline{\psi^{(n)} (0)} K_0 ^{(n)} \|^2
=  |\psi^{(n)} (0) |^2  \bigg( \frac{n!}{\beta(n)} \bigg)^2.
\end{eqnarray}
Because $D_{\psi, \varphi, n}$ is normal, by Remark  \ref{f666}, (\ref{m8}) and (\ref{m9}),  we conclude that
\begin{eqnarray} \label{m10}
|\psi^{(n)} (0) |^2   \bigg( \frac{n!}{\beta(n)} \bigg)^2 = \bigg( \frac{n!}{\beta(n)} \bigg)^4  \sum_{j=n}^{\infty}   \bigg(\frac{\beta(j)}{j!}\bigg )^2  |\psi^{(j)} (0) |^2.
\end{eqnarray}
Remark \ref{f666} implies that $\psi^{(n)}(0)\neq 0$, so  from (\ref{m10}),  for each $j>n$, $\psi^{(j)} (0)=0$.
Since Remark \ref{f666} also  shows that for any $j<n$, $\psi^{(j)} (0)=0$, the map $\psi$ must be of the form
$\psi(z)= a  z^n$,
 for some $a \in \Bbb{C} \setminus \{0 \}$. We have
\begin{eqnarray} \label{m11}
D_{\psi, \varphi, n} (K_0^{(n+1)})(z) &=& \bigg( \frac{(n+1)!}{\beta  (n+1)} \bigg)^2  \psi(z)  \varphi(z)  \\ \nonumber
&=&\bigg( \frac{(n+1)!}{\beta  (n+1)} \bigg)^2  a   z^n \varphi (z).
\end{eqnarray}
On the other hand, by using (\ref{m4}) and the fact that for each $m \not = n$, $\psi^{(m)} (0)=0$, we observe that
\begin{eqnarray} \label{m12}
D_{\psi, \varphi, n}^{\ast} (K_0^{(n+1)}) (z)  &=&  (n+1)  \overline{\psi^{(n)} (0)  \varphi^{\prime} (0)}   K_{0}^{(n+1)}(z)  \nonumber \\
&=&  \overline{a  \varphi^{\prime} (0)}  \bigg( \frac{(n+1)!}{ \beta  (n+1)}\bigg)^2   z^{n+1}. \nonumber \\
&=& \overline{a \varphi^{\prime}(0)} (n+1)! K_0^{(n+1)}(z),
\end{eqnarray}
so $K_0^{(n+1)}$ is an eigenvalue for $D_{\psi, \varphi, n}^{\ast}$  corresponding to eigenvalue
$\overline{a \varphi^{\prime}(0)} (n+1)!$. Therefore
\begin{eqnarray} \label{mahsa10}
D_{\psi, \varphi,n} K_0^{(n+1)} =  a \varphi^{\prime}(0) (n+1)! K_0^{(n+1)}.
\end{eqnarray}
Since $D_{\psi, \varphi, n}$ is normal on $A_{\alpha}^2$, by (\ref{m11}) and (\ref{mahsa10}), we see that
\begin{eqnarray*}
a \varphi^{\prime}(0) (n+1)! K_0^{(n+1)}(z) = \bigg ( \frac{(n+1)!}{\beta(n+1)} \bigg)^2  a  z^n  \varphi (z).
\end{eqnarray*}
Then $\varphi (z) = \varphi^{\prime}(0) z$.  Because $\varphi$ is not identically $0$, we conclude that $\varphi(z)=bz$for some $b \in \mathbb{D} \setminus \{0\}$.

For  the converse, take $\psi$ and $\varphi$ as in the statement of the proposition and assume that $D_{\psi, \varphi,n}$
is bounded on $A_{\alpha}^2$. Proposition \ref{f4} implies that
$D^{\ast}_{az^n, bz,n}= D_{\overline{a} z^n, \overline{b}z,n}$. Then for each  $f \in A_{\alpha}^2$, after some computation, we have
\begin{eqnarray} \label{f999}
D_{az^n, bz,n} D^{\ast}_{az^n, bz,n}(f)(z) &=& D_{az^n, bz,n}  D_{\overline{a} z^n, \overline{b}z,n} (f)(z) \nonumber \\
&=& D_{az^n, bz,n} (\overline{a} z^n f^{(n)}  (\overline{b}z)) \nonumber \\
&=& |a|^2  z^n \sum_{i=0}^{n}
\displaystyle{n \choose i}
\frac{n!}{i!} |b|^{2i} z^i f^{(n+i)} (|b|^2 z);
\end{eqnarray}
similarly
\begin{eqnarray} \label{f9990}
D^{\ast}_{az^n,bz,n} D_{az^n, bz,n}(f)(z)= |a|^2  z^n \sum_{i=0}^{n}
\displaystyle{n \choose i}
\frac{n!}{i!} |b|^{2i} z^i f^{(n+i)} (|b|^2 z).
 \end{eqnarray}
Then (\ref{f999}) and (\ref{f9990}) show that $D_{\psi, \varphi,n}$ is normal. Furthermore, Theorem \ref{f7} shows that
$D_{\psi, \varphi,n}$ is $J$-symmetric.
\end{proof}

Here we describe for which constant $a,b,c$, the analytic functions  $\varphi$ and $\psi$  that were obtained in Proposition \ref{f6} induce normal operator $D_{\psi, \varphi,n}$.

\begin{prop} \label{f8}
Suppose that $D_{\psi, \varphi,n}$ is a  bounded operator, with
\begin{eqnarray*}
\psi(z)= \frac{az^n}{n!(1-\overline{c}z)^{n+\alpha+2}}
\end{eqnarray*}
and
\begin{eqnarray*}
\varphi(z)=c + \frac{bz}{1-\overline{c}z},
\end{eqnarray*}
where $a=\psi^{(n)} (0)$ and $b=\varphi^{\prime}(0)$ are both nonzero complex numbers and $c= \varphi(0)$ belongs to
$\mathbb{D}$.
The operator $D_{\psi, \varphi,n}$ is normal on $A_{\alpha}^2$ if and only if either $b$ belongs to $\Bbb{R} \setminus \{0\}$
 or $c=0$. Moreover,   in this case of normal operator $D_{\psi, \varphi,n}$, either of the following holds:
\begin{itemize}
\item[$i)$] If $c=0$, then $D_{\psi, \varphi,n}$ is $J$-symmetric.
\item[$ii)$]  If $c \not =0$, then $D_{\psi, \varphi,n}$ is $C_{e^{-2 i \theta}z}$J-symmetric, where $\theta=\mbox{Arg}(c)$.
\end{itemize}
\end{prop}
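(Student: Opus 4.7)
The key set-up is the observation that $\psi = \frac{a}{tn!}K_c^{(n)}$, so Proposition \ref{f4} yields
\begin{equation*}
D_{\psi,\varphi,n}^{\ast} = \frac{\bar{a}}{tn!}\,D_{K_c^{(n)},\sigma,n},
\end{equation*}
where $\sigma(z) = c + \frac{\bar{b}z}{1-\bar{c}z}$ is the linear fractional map assigned to $\varphi$ in Proposition \ref{f4}. A direct comparison shows $\sigma = \varphi$ precisely when $b\in\mathbb{R}$. This dichotomy is what powers both directions of the equivalence.

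For the sufficiency, I would handle two cases. If $c=0$, then $\psi(z)=\frac{a}{n!}z^n$ and $\varphi(z)=bz$, so Proposition \ref{f5} directly gives that $D_{\psi,\varphi,n}$ is normal and $J$-symmetric. If $b\in\mathbb{R}$, write $a=|a|e^{i\gamma}$; then $e^{-i\gamma}D_{\psi,\varphi,n}=D_{e^{-i\gamma}\psi,\varphi,n}$ satisfies $(e^{-i\gamma}\psi)^{(n)}(0)=|a|\in\mathbb{R}$ and $\varphi'(0)=b\in\mathbb{R}$, hence is self-adjoint by Proposition \ref{f6}, so $D_{\psi,\varphi,n}$ is a unimodular scalar multiple of a self-adjoint operator and therefore normal. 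The ``Moreover'' part follows as a corollary: $c=0$ yields $J$-symmetry via Proposition \ref{f5}, while $c\neq 0$ (together with the necessity conclusion $b\in\mathbb{R}$) gives $C_{e^{-2i\theta}z}J$-symmetry via Proposition \ref{f6}(ii), using that unimodular scalar multiples preserve $C$-symmetry (if $CT^{\ast}C=T$ and $|\lambda|=1$, then $C(\lambda T)^{\ast}C=\lambda T$ by antilinearity of $C$).

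For the necessity, suppose $D_{\psi,\varphi,n}$ is normal with $c\neq 0$; I must show $b\in\mathbb{R}$. The computations in Propositions \ref{f5} and \ref{f6} give $DK_0^{(n)}=aK_c^{(n)}$ and $D^{\ast}K_0^{(n)}=\bar{a}K_c^{(n)}$, so $DD^{\ast}K_0^{(n)}=D^{\ast}DK_0^{(n)}$ rewrites as $\bar{a}\,DK_c^{(n)}=a\,D^{\ast}K_c^{(n)}$. Using the above form for $D^{\ast}$ and the identity $\psi=\frac{a}{tn!}K_c^{(n)}$, both sides become $\frac{|a|^2}{tn!}K_c^{(n)}(z)$ times $(K_c^{(n)})^{(n)}(\varphi(z))$ on the left and $(K_c^{(n)})^{(n)}(\sigma(z))$ on the right. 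Cancelling the common factor (which vanishes only at $z=0$) gives the holomorphic identity $(K_c^{(n)})^{(n)}\circ\varphi \equiv (K_c^{(n)})^{(n)}\circ\sigma$ on $\mathbb{D}$. Differentiating at $z=0$ (where $\varphi(0)=\sigma(0)=c$, $\varphi'(0)=b$, $\sigma'(0)=\bar{b}$) yields
\begin{equation*}
(K_c^{(n)})^{(n+1)}(c)\,(b-\bar{b})=0.
\end{equation*}
The main technical step is to verify $(K_c^{(n)})^{(n+1)}(c)\neq 0$ for $c\neq 0$. Expanding the series $K_c^{(n)}(w)=t\sum_{j\geq 0}\frac{\Gamma(n+\alpha+2+j)}{j!\,\Gamma(n+\alpha+2)}\bar{c}^{j}w^{n+j}$, differentiating $n+1$ times termwise, and evaluating at $w=c$ produces $t\bar{c}$ times a sum of strictly positive real numbers (involving $|c|^{2j}$ and ratios of Gamma values), which is nonzero. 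Hence $b=\bar{b}$, completing the necessity.
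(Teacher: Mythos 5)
Your proof is correct, and its necessity argument is genuinely different from the paper's. The paper proves necessity by testing the normality condition on specific kernel functions: it computes $D_{\psi,\varphi,n}K_{1/2}$ and $D_{\psi,\varphi,n}^{\ast}K_{1/2}$, which are constant multiples of $K_{p_1}^{(n)}$ and $K_{p_2}^{(n)}$ with $p_1=c+\frac{\bar b/2}{1-\bar c/2}$ and $p_2=c+\frac{b/2}{1-\bar c/2}$, deduces $|p_1|=|p_2|$ from equality of norms, and splits into the cases $c\notin\mathbb{R}$ and $c\in\mathbb{R}\setminus\{0\}$ (the latter handled with $K_{i/2}$). You instead apply the operator identity $DD^{\ast}=D^{\ast}D$ to $K_0^{(n)}$, use Proposition \ref{f4} (exactly as in the converse of Proposition \ref{f6}) to write $D^{\ast}=\frac{\bar a}{tn!}D_{K_c^{(n)},\sigma,n}$ with $\sigma(z)=c+\frac{\bar b z}{1-\bar c z}$, and after cancelling $K_c^{(n)}$ obtain the functional identity $(K_c^{(n)})^{(n)}\circ\varphi\equiv(K_c^{(n)})^{(n)}\circ\sigma$; differentiating at $0$ and checking $(K_c^{(n)})^{(n+1)}(c)=t\bar c\cdot(\text{positive sum})\neq 0$ for $c\neq 0$ gives $b=\bar b$ uniformly, with no case split and no need for the norm computation $\|K_w^{(n)}\|^2$. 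Your sufficiency is also slightly more careful than the paper's: Proposition \ref{f6} literally requires $a$ real, and the paper cites it without comment, whereas your factorization $D_{\psi,\varphi,n}=e^{i\gamma}D_{e^{-i\gamma}\psi,\varphi,n}$ with $|a|=e^{-i\gamma}a$ real reduces to the self-adjoint case and, together with the observation that unimodular scalar multiples preserve $C$-symmetry, also cleanly justifies the ``Moreover'' statements (the paper handles these by repeating the argument of Proposition \ref{f6}). In short, the paper's route is computationally lighter per step but case-based and rests on extracting $c=\bar c$ from $|p_1|=|p_2|$; yours trades that for one termwise-differentiation/positivity verification and yields the conclusion $b\in\mathbb{R}$ in a single stroke.
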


\begin{proof}
Suppose that $b\in \Bbb{R} \setminus \{0\}$ or $c=0$.
Propositions \ref{f6} and \ref{f5} imply that  $D_{\psi, \varphi,n}$ is normal.

For the converse, suppose that $b$ and $c$ belong to $\Bbb{C} \setminus \Bbb{R}$. We have
$$D_{\psi, \varphi,n}(K_{\frac{1}{2}}) (z)=\frac{t \psi(z)}{2^n (1 - \frac{1}{2} \varphi(z))^{n+\alpha+2}}=\frac{a}{2^n n! (1- c/2 )^{n + \alpha+2}} K_{p_1}^{(n)} (z),$$
where $p_1=c + \frac{\overline{b}/2}{1-\overline{c}/2}$.
On the other hand,  by Lemma \ref{f2}, we see that
$$D_{\psi, \varphi,n}^{\ast}(K_{\frac{1}{2}}) (z)=\overline{\psi(1/2)} K_{\varphi(1/2)} ^{(n)}(z)=\frac{\overline{a}}{2^n n! (1-c/2)^{n+\alpha+2}} K_{p_2}^{(n)}(z),$$
where   $p_2=c+ \frac{b/2}{1-\overline{c}/2}$.

If  $D_{\psi, \varphi,n}$ were normal, then
\begin{eqnarray*}
\|D_{\psi, \varphi,n} (K_{\frac{1}{2}}) \|^2 &=& \bigg | \frac{a}{2^n n! (1- c/2)^{n + \alpha+2}}  \bigg|^2  \|  K_{p_1}^{(n)}\|^2 \nonumber \\
&=&  \bigg | \frac{a}{2^n n! (1- c/2)^{n + \alpha+2}}  \bigg|^2   \sum_{j=n}^{\infty} \frac{(|p_1|^2)^{j-n}}{\beta(j)^2} \bigg(\frac{j!}{(j-n)!} \bigg)^2 \nonumber
\end{eqnarray*}
would equal
\begin{eqnarray*}
\|D_{\psi, \varphi,n}^{\ast} (K_{\frac{1}{2}}) \|^2 &=& \bigg | \frac{a}{2^n n! (1-c/2 )^{n + \alpha+2}}  \bigg|^2  \|  K_{p_2}^{(n)}\|^2 \nonumber \\
&=&  \bigg | \frac{a}{2^n n! (1-c/2 )^{n + \alpha+2}}  \bigg|^2   \sum_{j=n}^{\infty} \frac{(|p_2|^2)^{j-n}}{\beta(j)^2} \bigg(\frac{j!}{(j-n)!} \bigg)^2. \nonumber
\end{eqnarray*}
Therefore  $|p_1|^2=|p_2|^2$ and so $c=\overline{c}$ which is a contradiction.
Now if $D_{\psi, \varphi,n}$ were normal and $b \in \Bbb{C} \setminus \Bbb{R}$ and $c \in \Bbb{R} \setminus \{0\}$,
then by  the similar idea as stated, we can see that
 $\| D_{\psi, \varphi,n}^{\ast}K_{\frac{i}{2}}\| \not = \| D_{\psi, \varphi,n} K_{\frac{i}{2}} \| $
which is a contradiction.
The rest of the proof is obtained by  the similar argument as stated in Proposition   \ref{f6}.
\end{proof}

 \end{document}